\newcommand{\N}{\mathbb{N}}
\newcommand{\Z}{\mathbb{Z}}
\newcommand{\R}{\mathbb{R}}
\newcommand{\C}{\mathbb{C}}
\newcommand{\ep}{\epsilon}
\newcommand{\ffi}{\varphi}
\newcommand{\e}{\mathrm{e}}
\newcommand{\wb}{\overline}
\newcommand{\diff}{\,\mathrm{d}}
\newcommand{\dif}{\mathrm{d}}
\let\le=\leqslant
\let\ge= \geqslant
\newcommand{\norm}[1]{\ensuremath{\left\Vert #1 \right\Vert }}
\newtheorem{theorem}{Theorem}
\newtheorem{remark}{Remark}
\newenvironment{proof}
{\hspace{-3mm}\textit{Proof.}}{\nolinebreak\hfill$\Box$}
\numberwithin{equation}{section}
\begin{document}

\title{\Large Instability of an integrable nonlocal NLS}

\author{Fran\c cois Genoud \\
\small\emph{Delft Institute of Applied Mathematics, Delft University of Technology}\\
\small\emph{Mekelweg~4, 2628 CD Delft, The Netherlands}\\
\small s.f.genoud@tudelft.nl}

\date{}



\maketitle
 
\begin{abstract}
In this note we discuss the global dynamics of an integrable nonlocal NLS
on $\R$, which has been the object of recent investigation by integrable systems methods.
We prove two results which are in striking contrast with the case of the local cubic focusing NLS on $\R$. 
First, finite time blow-up solutions exist with arbitrarily small initial data in $H^s(\R)$, for any $s\ge0$.
On the other hand, the solitons of the local NLS, which are also solutions of the nonlocal equation, are unstable
by blow-up for the latter.
\end{abstract}


\section{Introduction}

The nonlocal nonlinear Schr\"{o}dinger equation
\begin{equation}\label{nls}
iu_t(t,x)+u_{xx}(t,x)+u^2(t,x)\wb{u}(t,-x)=0, \qquad u(t,x):\R\times\R \to \C,
\end{equation}
has recently been shown to be a completely integrable system, with infinitely many conservation laws \cite{mus1,mus2}.
The equation is related to two different areas of physics: gain/loss systems in optics and so-called
$PT$-symmetric quantum mechanics, see \cite{mus0,yang,ben,mos} and references therein. 
Mathematically, the feature connecting \eqref{nls} to these areas
is the $PT$-symmetry of the `nonlinear potential' $u(t,x)\wb{u}(t,-x)$. Namely,
this quantity is invariant under the joint transformation $x\to -x$ and $i\to -i$ (parity and time reversal).

The inverse-scattering transform was applied in \cite{mus1,mus2} to produce a variety of solutions to \eqref{nls}. 
In particular, a `one-soliton solution' is obtained, which blows up in finite time (actually, up to rescaling, 
at all times $t=2m+1, \ m\in\Z$). The purpose of this note is to use this peculiar solution to prove some results
about the global dynamics of \eqref{nls}, which are in striking contrast with the case of
the local focusing cubic equation
\begin{equation}\label{local_nls}
iu_t(t,x)+u_{xx}(t,x)+|u(t,x)|^2u(t,x)=0, \qquad u(t,x):\R\times\R \to \C.
\end{equation}

We will first show that \eqref{nls} is locally well-posed (in $H^1(\R)$) but  
then we prove that there exist solutions which blow up in finite time (in $L^\infty(\R)$), 
with arbitrarily small initial data in $H^s(\R)$, for any $s\ge0$. This
shows in particular that the trivial solution, $u\equiv0$, is unstable by blow-up.

Let us now observe that \eqref{nls} reduces to \eqref{local_nls}
provided the discussion is restricted to even solutions. 
Hence, the well-known solitons 
$u_\omega(t,x)=\e^{i\omega t}\ffi_\omega(x)$ of \eqref{local_nls}, 
where
\begin{equation}\label{sech}
\ffi_\omega(x)=\frac{2\sqrt{2\omega}}{\e^{\sqrt{w}x}+\e^{-\sqrt{w}x}}, \qquad \omega>0,
\end{equation}
are also solutions of \eqref{nls}. These standing waves are orbitally stable with respect to \eqref{local_nls} but
we show that they are unstable by blow-up with respect to \eqref{nls}.  

The rest of the paper is organised as follows. In Section~\ref{well} we prove the local well-posedness
and the blow-up instability of the zero solution. In Section~\ref{blow} we prove the blow-up instability of the solitons 
\eqref{sech}. We conclude in Section~\ref{defoc} with some remarks on the `defocusing' equation 
\begin{equation}\label{defoc_nls}
iu_t(t,x)+u_{xx}(t,x)-u^2(t,x)\wb{u}(t,-x)=0, \qquad u(t,x):\R\times\R \to \C.
\end{equation}

\subsection*{Notation} For non-negative quantities $A,B$ we write $A \lesssim B$ if $A\le CB$ for some
constant $C>0$, whose exact value is not essential to the analysis.

\subsection*{Acknowledgement} I am grateful to Stefan Le Coz for an interesting discussion about 
\eqref{nls}, and to the anonymous referee, whose remarks helped improve the presentation of the paper.


\section{Instability of the trivial solution}\label{well}

We start with a local well-posedness result.

\begin{theorem}\label{local.thm}
Given any initial data $u_0\in H^1(\R)$, there exists a unique maximal solution 
$u\in C\big([0,T_\mathrm{max}),H^1(\R)\big)$ of \eqref{nls} such that $u(0,\cdot)=u_0$, where 
$T_\mathrm{max}=T_\mathrm{max}(\norm{u_0}_{H^1(\R)})$.
\end{theorem}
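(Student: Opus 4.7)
The plan is to run the usual Banach contraction argument on the Duhamel formulation of \eqref{nls}, the only novelty being the treatment of the reflection $x\mapsto -x$ in the nonlinearity. Writing $S(t)=\e^{it\partial_{xx}}$ for the free Schr\"odinger group and
\[
N(u)(x):=u(x)^2\wb{u}(-x),
\]
the integral form of \eqref{nls} reads
\[
\Phi(u)(t):=S(t)u_0+i\int_0^t S(t-s)\,N(u(s))\diff s,
\]
and I would look for a fixed point of $\Phi$ in a closed ball of $X_T:=C([0,T],H^1(\R))$, for $T=T(\|u_0\|_{H^1})>0$ sufficiently small.

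The three ingredients I would combine are: (i) $S(t)$ is a unitary group on $H^s(\R)$ for every $s\ge0$; (ii) the parity reflection $R:v(\cdot)\mapsto v(-\cdot)$ is an isometry of $H^1(\R)$, whence $\|\wb{u}(-\cdot)\|_{H^1}=\|u\|_{H^1}$; (iii) $H^1(\R)$ is a Banach algebra, thanks to the Sobolev embedding $H^1(\R)\hookrightarrow L^\infty(\R)$. From (ii) and (iii) one immediately gets the estimates
\[
\|N(u)\|_{H^1}\lesssim\|u\|_{H^1}^3,\qquad
\|N(u)-N(v)\|_{H^1}\lesssim\bigl(\|u\|_{H^1}^2+\|v\|_{H^1}^2\bigr)\|u-v\|_{H^1},
\]
which after integration in time and use of (i) yield
\[
\|\Phi(u)\|_{X_T}\le\|u_0\|_{H^1}+C\,T\,\|u\|_{X_T}^3,\quad
\|\Phi(u)-\Phi(v)\|_{X_T}\le C\,T\,(\|u\|_{X_T}^2+\|v\|_{X_T}^2)\,\|u-v\|_{X_T}.
\]
Choosing $T=T(\|u_0\|_{H^1})$ small enough so that the right-hand sides leave a ball of radius $2\|u_0\|_{H^1}$ invariant and produce a contraction factor $<1$, one obtains a unique solution $u\in X_T$.

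To pass from a local-in-time solution to the maximal one I would use the standard continuation argument: reapplying the local result at each time $t_0$ with initial datum $u(t_0,\cdot)$ gives a solution on $[t_0,t_0+T(\|u(t_0)\|_{H^1})]$, and gluing these yields a maximal existence time $T_\mathrm{max}\in(0,\infty]$ depending only on $\|u_0\|_{H^1}$; uniqueness on $[0,T_\mathrm{max})$ follows from the local uniqueness together with a connectedness argument. The only conceptual step that differs from the treatment of \eqref{local_nls} is the use of the isometry property (ii) to absorb the reflection, so no genuine obstacle is expected; the rest is a transcription of the classical $H^1$ subcritical theory.
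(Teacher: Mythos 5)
Your proposal is correct and follows essentially the same route as the paper: a contraction argument for the Duhamel map in a ball of $L^\infty\big((0,T);H^1(\R)\big)$, with local existence time depending only on $\norm{u_0}_{H^1}$ and the standard continuation argument. The only cosmetic difference is that you obtain the cubic estimate $\norm{N(u)}_{H^1}\lesssim\norm{u}_{H^1}^3$ directly from the Banach algebra property of $H^1(\R)$ together with the isometry of the reflection, whereas the paper derives the same bounds by hand via H\"older's inequality and the Sobolev embedding.
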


\begin{proof}
The theorem is proved by a fixed point argument, similar to the
case of the local equation \eqref{local_nls}. However, some calculations are different due to the 
nonlocal nonlinearity, so we give the proof here for completeness.

Fix $u_0\in H^1(\R)$, define $F(u)(x)=u^2(x)\wb{u}(-x)$ and a map 
$\tau: X_T\to X_T$ by
\[
\tau(u)(t)=S(t)u_0+i\int_0^tS(t-s)F(u)(s) \diff s,
\]
where $X_T=L^\infty\big((0,T);H^1(\R)\big)$ for some $T>0$ and $S(t)$ is the free Schr\"odinger group.
We shall prove the existence of a unique fixed point of $\tau$ in the ball 
\[
B_R=\{u\in X_T : \norm{u}_{X_T}<R\},
\]
for suitable values of $T,R>0$. That this fixed point can be extended to a maximal solution
$u\in C\big([0,T_\mathrm{max}),H^1(\R)\big)$ of \eqref{nls} then follows by standard arguments.

First observe that, for any $p\ge2$, the Sobolev embedding theorem yields
\begin{align*}
\norm{F(u)}_{L^p}^p	&= \int_\R |u(x)|^{2p}|u(-x)|^p \diff x \\
					&\le \Big\{\int_\R |u(x)|^{2pr}\diff x\Big\}^{1/r}\Big\{\int_\R |u(-x)|^{ps}\diff x\Big\}^{1/s} \\
					&= \norm{u}_{L^{2pr}}^{2p}\norm{u}_{L^{ps}}^{p}
					\lesssim \norm{u}_{H^1}^{3p},	
\end{align*}
where $r,s\ge1$ are arbitrary H\"older conjugate exponents. It follows that
\begin{equation}\label{Festimate}
\norm{F(u)}_{L^p} \lesssim \norm{u}_{H^1}^{3} \quad \ \text{for any} \ p\ge 2,
\end{equation}
and a similar estimate yields
\begin{equation}\label{Fxestimate}
\norm{F(u)_x}_{L^p} \lesssim \norm{u}_{H^1}^{3} \quad \text{for any} \ p\ge 2,
\end{equation}
where 
\begin{equation}\label{F_x}
F(u)_x=\big[u^2(x)\wb{u}(-x)\big]_x=2u(x)u_x(x)\wb{u}(-x)-u^2(x)\wb{u}_x(-x).
\end{equation}
By Strichartz's estimate and \eqref{Festimate}--\eqref{Fxestimate} with $p=2$, 
we see in particular that $\tau$ indeed maps $X_T$ into $X_T$. Furthermore,
there exist constants $C_1,C_2>0$ such that
\begin{align*}\label{tauestimate}
\norm{\tau(u)}_{X_T}	&\le C_1\norm{u_0}_{H^1}+T\norm{F(u)}_{L^\infty(0,T;H^1)} \\
				&\le C_1\norm{u_0}_{H^1}+TC_2\norm{u}^3_{X_T} .
\end{align*}
Choosing $R=2C_1\norm{u_0}_{H^1}$ and $T>0$ such that $C_1TR^2=1/2$, it follows that, for any
$u\in B_R$,
\[
\norm{\tau(u)}_{X_T} \le \frac{R}{2}+TC_1 \norm{u}^2_{X_T}\norm{u}_{X_T}
				\le \frac{R}{2}+\frac12 \norm{u}_{X_T}<R.
\]
Hence, for these values of $T,R>0$, $\tau$ maps the ball $B_R$ into itself. 

We now show that, if $T>0$ is small enough, then $\tau$ is a contraction in $B_R$. We have
\begin{equation}\label{taudiff}
\norm{\tau(u)-\tau(v)}_{X_T}\le T\norm{F(u)-F(v)}_{L^\infty(0,T;H^1)}, \quad u,v\in X_T.
\end{equation}
Writing
$|F(u)-F(v)|=|\int_0^1 \frac{\dif}{\dif\theta} F(\theta u +(1-\theta) v) \diff\theta|$,
we obtain
\[
|F(u)-F(v)|\lesssim |u(x)+v(x)||u(x)-v(x)||u(-x)+v(-x)|+|u(x)+v(x)|^2|u(-x)-v(-x)|
\]
and it follows that
\begin{equation}\label{FL2}
\norm{F(u)-F(v)}_{L^2}\lesssim \big(\Vert{u}\Vert_{H^1}^2+\Vert{v}\Vert_{H^1}^2\big)\norm{u-v}_{L^2}.
\end{equation}
On the other hand, in view of \eqref{F_x}, letting 
\[
G(u)(x)=2u(x)u_x(x)\wb{u}(-x) \quad\text{and}\quad H(u)(x)=u^2(x)\wb{u}_x(-x)
\]
we have
\[
|[F(u)-F(v)]_x| \le |G(u)-G(v)|+|H(u)-H(v)|,
\]
where
\begin{align*}
|G(u)-G(v)| 	&\lesssim |u(x)-v(x)||u_x(x)+v_x(x)||u(-x)+v(-x)| \\ 
			&\phantom{\lesssim}+|u(x)+v(x)||u_x(x)-v_x(x)||u(-x)+v(-x)| \\
			&\phantom{\lesssim}+|u(x)+v(x)||u_x(x)+v_x(x)||u(-x)-v(-x)|
\end{align*}
and
\begin{align*}
|H(u)-H(v)| 	&\lesssim |u(x)+v(x)||u(x)-v(x)||u_x(-x)+v_x(-x)| \\ 
			&\phantom{\lesssim}+|u(x)+v(x)|^2|u_x(-x)+v_x(-x)|.
\end{align*}
It follows that
\begin{equation}\label{F_xL2}
\norm{[F(u)-F(v)]_x}_{L^2}\lesssim  \big(\Vert{u}\Vert_{H^1}^2+\Vert{v}\Vert_{H^1}^2\big)\norm{u-v}_{H^1}.
\end{equation}
By \eqref{taudiff}, \eqref{FL2} and \eqref{F_xL2}, there is a constant $C>0$ such that
\[
\norm{\tau(u)-\tau(v)}_{X_T}\le CT\big(\Vert{u}\Vert_{X_T}^2+\Vert{v}\Vert_{X_T}^2\big)\norm{u-v}_{X_T}.
\]
Hence, if $u,v\in B_R$ we have
\[
\norm{\tau(u)-\tau(v)}_{X_T}\le 2CTR^2\norm{u-v}_{X_T},
\]
showing that $\tau$ is a contraction in $B_R$ provided $T<(2CR^2)^{-1}$.
The contraction mapping principle now yields a unique fixed point of $\tau$ in $B_R$,
which concludes the proof.
\end{proof}

\medskip
For the local equation \eqref{local_nls}, the next chapter of the story is well known. 
One proves that, for any $u_0\in H^1(\R)$, the maximal solution is global, i.e.~that 
$T_\mathrm{max}=\infty$. This is usually done by means of the energy and charge functionals
\[
E(u)=\frac12\int_\R |u_x|^2\diff x -\frac14\int_\R|u|^4 \diff x, \qquad
Q(u)=\frac12\int_\R |u|^2\diff x.
\]
Using the conservation of these quantities along the flow and
the Gagliardo--Nirenberg inequality, one shows that
the first term in $E$ is controlled by the second one, and must remain bounded. Hence, global existence in
$H^1(\R)$ is ensured by the blow-up alternative.

The corresponding conservation laws for \eqref{nls} are \cite{mus1,mus2}
\[
E(u)=\frac12\int_\R u_x(x)\wb{u}_x(-x)\diff x -\frac14\int_\R u^2(x)\wb{u}^2(-x) \diff x \quad\text{and}\quad
Q(u)=\frac12\int_\R u(x)\wb{u}(-x)\diff x.
\]
Even though each of these integrals is real, in general none of the three terms has a definite sign, unless
$u$ is even (or odd), in which case we recover the energy and charge of the local equation \eqref{local_nls}.
This predicament wipes away any hope of proving a global well-posedness result for \eqref{nls},
even for small initial data. In fact, we have the following result.

\begin{theorem}\label{global.thm}
For any $0<\alpha<1$, there exists a solution $u^{\alpha}(t,x)$ of \eqref{nls}, defined on $[0,T_\alpha)\times\R$,
where $T_\alpha=\pi/3\alpha^2$, with the following properties:
\begin{itemize}
\item[(i)] $u^{\alpha}$ blows up in $L^\infty(\R)$ as $t\to T_\alpha$, with
$\lim_{t\to T_\alpha}|u^{\alpha}(t,0)|=\infty$;
\item[(ii)] $u^{\alpha}_0=u^{\alpha}(0,\cdot)$ satisfies
$\norm{u^{\alpha}_0}_{H^k(\R)} \lesssim \alpha^{1/2}$, for all $k\in \N$.
\end{itemize}
\end{theorem}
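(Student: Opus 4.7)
My plan is to exploit the scaling invariance of \eqref{nls}: if $U(t,x)$ is a solution, then for every $\alpha>0$,
\[
u^\alpha(t,x) := \alpha\,U(\alpha^2 t,\alpha x)
\]
is also a solution, as one verifies directly using the cubic homogeneity of the nonlinearity $u^2(t,x)\wb{u}(t,-x)$ (note that the parity $x\mapsto -x$ is preserved by the dilation $x\mapsto \alpha x$). Applied to the one-soliton solution $U$ of \cite{mus1,mus2} mentioned in the introduction, this rescaling simultaneously shrinks the initial datum in every $H^k$ norm and delays the blow-up by a factor $\alpha^{-2}$, so both assertions of the theorem reduce to properties of the single soliton $U$.

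The first step is to import the explicit rational-exponential formula for the one-soliton from \cite{mus1,mus2} and, after a convenient choice of soliton parameters, verify that: (a) $U$ is defined on $[0,\pi/3)\times\R$; (b) its initial datum $U_0:=U(0,\cdot)$ is Schwartz, so in particular $U_0\in H^k(\R)$ for every $k\in\N$; (c) $|U(t,0)|\to\infty$ as $t\nearrow \pi/3$, reflecting the fact that the denominator of $U$ vanishes at the $PT$-symmetric space-time point $(\pi/3,0)$. Given (a) and (c), $u^\alpha$ is defined on $[0,T_\alpha)$ with $T_\alpha=\pi/(3\alpha^2)$, and the identity $u^\alpha(t,0)=\alpha\,U(\alpha^2 t,0)$ yields part (i) immediately.

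For part (ii), a change of variables gives, for each integer $j\ge 0$,
\[
\norm{\partial_x^j u^\alpha_0}_{L^2}^2 = \alpha^{2(j+1)}\!\int_\R |U_0^{(j)}(\alpha x)|^2 \diff x = \alpha^{2j+1}\norm{U_0^{(j)}}_{L^2}^2 .
\]
Summing over $0\le j\le k$ and using $0<\alpha<1$, the $j=0$ contribution dominates, so $\norm{u^\alpha_0}_{H^k}^2\lesssim \alpha$, which is exactly (ii).

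The main obstacle will be the verification of (a)--(c) from the explicit soliton formula of \cite{mus1,mus2}: one must check that the singular set of $U$ consists of isolated points lying on the axis $\{x=0\}$ (so that the $L^\infty$ blow-up is already detected at the origin), and calibrate the soliton parameters so that the first positive singular time equals $\pi/3$. Once that calibration is fixed, the scaling argument above reduces the rest to a routine change of variables.
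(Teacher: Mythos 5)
Your proposal is correct and amounts to essentially the same argument as the paper's: the paper works with the explicit two-parameter one-soliton $u^{\alpha,\beta}$ and fixes $\beta=\alpha/2$, which is exactly the dilation $\alpha\,U(\alpha^2t,\alpha x)$ of $U=u^{1,1/2}$ (whose first blow-up time is $\pi/3$), and the paper's own Remark~\ref{rem}~(c) notes that the norm relations \eqref{sobolev_norms} follow from the change of variables $y=\alpha x$, i.e.\ precisely your scaling computation for part (ii). The verification you defer to the explicit formula (denominator vanishing only at $x=0$, at times $t=(2m+1)\pi/3$) is handled in the paper at the same level of detail, via Remark~\ref{rem}~(b).
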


\begin{proof}
The result is obtained from the explicit solution
\begin{equation}\label{2param}
u^{\alpha,\beta}(t,x)=\frac{2\sqrt{2}(\alpha+\beta)}{\e^{-4i\alpha^2t}\e^{2\alpha x}+\e^{-4i\beta^2t}\e^{-2\beta x}}.
\end{equation}
For any $\alpha,\beta>0, \alpha\neq\beta$, this function blows up at all times
\[
T_m=\frac{(2m+1)\pi}{4(\alpha^2-\beta^2)}, \qquad m\in \Z,
\]
with $\lim_{t\to T_m}|u^{\alpha,\beta}(t,0)|=\infty$, and is a solution of \eqref{nls}
in the sense of Theorem~\ref{local.thm} between these times, 
i.e.~$u^{\alpha,\beta}\in C\big((T_m,T_{m+1}),H^1(\R)\big), \ m\in \Z$.
To simplify the analysis, we choose $\beta=\alpha/2$, so that $u^{\alpha,\beta}$ reduces to
\begin{equation}\label{1param}
u^{\alpha}(t,x)=\frac{3\sqrt{2}\alpha}{\e^{-4i\alpha^2t}\e^{2\alpha x}+\e^{-4i\beta^2t}\e^{-\alpha x}},
\end{equation}
and the first blow-up time to the right of $t=0$ becomes
\[
T_\alpha=\frac{\pi}{3\alpha^2}.
\]
For the initial condition $u^{\alpha}_0=u^{\alpha}(0,\cdot)$, direct calculations then show that
\begin{equation}\label{norms}
\norm{u^{\alpha}_0}_{L^2}^2=\frac{4\pi\alpha}{3}, \quad
\norm{(u^{\alpha}_0)_x}_{L^2}^2=\frac{8\pi\alpha^3}{3\sqrt{3}}, \quad
\norm{(u^{\alpha}_0)_{xx}}_{L^2}^2=\frac{8\pi\alpha^5}{\sqrt{3}}.
\end{equation}
Upon inspection of the integrals involved, one easily sees that for all $k\in\N$, 
there is a constant $C_k>0$, independent of $\alpha$, such that
\begin{equation}\label{sobolev_norms}
\norm{\frac{\dif^ku^{\alpha}_0}{\dif x^k}}_{L^2}^2 = C_k \alpha^{2k+1}.
\end{equation}
For $\alpha\in(0,1)$, this completes the proof.
\end{proof}

\begin{remark}\label{rem}
\rm
(a) If $\alpha=\beta=\sqrt{\omega}/2$, then $u^{\alpha,\beta}(t,x)$ reduces to the usual soliton
$\e^{i\omega t}\ffi_\omega(x)$, with $\ffi_\omega$ defined in \eqref{sech}.

(b) A direct verification shows that the solution $u^{\alpha,\beta}(t,x)$ only blows up at $x=0$, i.e.~the 
denominator in \eqref{2param} never vanishes if $x\neq0$.

(c) The particular choice $\beta=\alpha/2$ enables one to compute explicitly the norms in \eqref{norms}. 
In fact, the relations \eqref{sobolev_norms} are easily derived by choosing $\beta=\gamma\alpha$ with, 
say, $\gamma\in(0,1)$, and using the change of variables $y=\alpha x$ in the integrals.
\end{remark}

\section{Instability of the solitons}\label{blow}

The blow-up instability of the solitons \eqref{sech} is now a consequence of Remark~\ref{rem}~(a). More precisely, fixing
$\alpha=\sqrt{\omega}/2$ and letting $\beta=\sqrt{\omega+\delta}/2$ with $0<\delta\ll1$, we obtain finite time
blow-up solutions $u^{\alpha,\beta}$ as close as we want to $\e^{i\omega t}\ffi_\omega(x)$.

\begin{theorem}
Fix $\omega>0$. For any $\ep>0$ there exists $q_{\omega,\ep}\in H^1(\R)$ such that 
\[
\norm{\ffi_\omega-q_{\omega,\ep}}_{H^1(\R)}<\ep 
\]
and the solution with initial data 
$u(0,\cdot)=q_{\omega,\ep}$ blows up in finite time.
\end{theorem}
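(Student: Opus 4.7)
The plan is essentially spelled out in the paragraph preceding the theorem: perturb the soliton by moving off the diagonal in the two-parameter family \eqref{2param}. Fix $\omega>0$ and set $\alpha=\sqrt{\omega}/2$, so that (by Remark~\ref{rem}~(a)) the choice $\beta=\alpha$ in \eqref{2param} reproduces the soliton $\e^{i\omega t}\ffi_\omega(x)$. For a small parameter $\delta>0$ set $\beta_\delta=\sqrt{\omega+\delta}/2$ and define
\[
q_\delta(x):=u^{\alpha,\beta_\delta}(0,x)=\frac{2\sqrt{2}(\alpha+\beta_\delta)}{\e^{2\alpha x}+\e^{-2\beta_\delta x}}.
\]
Since $\beta_\delta>\alpha$, the smallest positive blow-up time of $u^{\alpha,\beta_\delta}$ given in Theorem~\ref{global.thm} is $T_\delta=\pi/(4(\beta_\delta^2-\alpha^2))=\pi/\delta$.

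The first step would be to show that $q_\delta\to\ffi_\omega$ in $H^1(\R)$ as $\delta\to 0^+$. Pointwise convergence of $q_\delta$ and of $(q_\delta)_x$ to $\ffi_\omega$ and $\ffi_\omega'$ is immediate from the explicit formula, and for $\delta\in(0,\alpha^2)$ both are bounded by a fixed multiple of $(\e^{\alpha x}+\e^{-\alpha x})^{-1}\in L^2(\R)$; dominated convergence then gives $\|q_\delta-\ffi_\omega\|_{H^1}\to 0$. Given $\eps>0$, choose $\delta=\delta(\eps)>0$ small enough that $\|q_\delta-\ffi_\omega\|_{H^1}<\eps$, and set $q_{\omega,\eps}:=q_\delta$.

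Next, one checks by direct inspection of the formula \eqref{2param} (as in the proof of Theorem~\ref{global.thm}) that $u^{\alpha,\beta_\delta}\in C\big([0,T_\delta),H^1(\R)\big)$ and solves \eqref{nls} on this interval with initial datum $q_{\omega,\eps}$. By the uniqueness part of Theorem~\ref{local.thm}, the maximal $H^1$-solution of \eqref{nls} emanating from $q_{\omega,\eps}$ must coincide with $u^{\alpha,\beta_\delta}$ on $[0,T_\delta)$. Since $|u^{\alpha,\beta_\delta}(t,0)|\to\infty$ as $t\to T_\delta$, this maximal solution blows up in finite time, proving the theorem.

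The only nontrivial step is the $H^1$-convergence $q_\delta\to\ffi_\omega$, and even this is routine once a uniform exponential envelope for $q_\delta$ and $(q_\delta)_x$ in $\delta$ is produced; I expect no real obstacles beyond bookkeeping.
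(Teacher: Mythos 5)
Your proposal is correct and follows essentially the same route as the paper: perturb the soliton within the explicit two-parameter family by taking $\alpha=\sqrt{\omega}/2$, $\beta=\sqrt{\omega+\delta}/2$, and prove $H^1$-convergence of the initial data via pointwise convergence plus a uniform exponential envelope and dominated convergence. Your added remark invoking the uniqueness part of Theorem~\ref{local.thm} to identify the maximal solution with the explicit blow-up solution is a sensible (if implicit in the paper) finishing touch.
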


\begin{proof}
Define $q_{\omega,\delta}(x)$ as $u^{\alpha,\beta}(0,x)$, with $\alpha=\sqrt{\omega}/2$ and 
$\beta=\sqrt{\omega+\delta}/2, \ \delta>0$, namely 
\[
q_{\omega,\delta}(x)=\frac{\sqrt{2}(\sqrt{\omega}+\sqrt{\omega+\delta})}{\e^{\sqrt{w}x}+\e^{-\sqrt{w+\delta}x}}.
\]
We only need to check that
\begin{equation}
\norm{\ffi_\omega-q_{\omega,\delta}}_{H^1} \to 0 \quad\text{as} \ \delta\to0.
\end{equation}
To show that
\begin{equation}\label{L2}
\int_\R |\ffi_\omega(x) - q_{\omega,\delta}(x)|^2\diff x 
\to 0 \quad\text{as} \ \delta\to0,
\end{equation}
we first observe that $|\ffi_\omega(x) - q_{\omega,\delta}(x)|\to 0$ as $\delta\to0$ for all $x\in\R$.
Furthermore if $0<\delta<1$, we have, for $-\infty<x\le0$,
\[
\frac{\sqrt{2}(\sqrt{\omega}+\sqrt{\omega+\delta})}{\e^{\sqrt{w}x}+\e^{-\sqrt{w+\delta}x}}
\le \frac{\sqrt{2}(\sqrt{\omega}+\sqrt{\omega+1})}{\e^{\sqrt{w}x}+\e^{-\sqrt{w}x}},
\]
while, for $0<x<\infty$,
\[
\frac{\sqrt{2}(\sqrt{\omega}+\sqrt{\omega+\delta})}{\e^{\sqrt{w}x}+\e^{-\sqrt{w+\delta}x}}
\le \frac{\sqrt{2}(\sqrt{\omega}+\sqrt{\omega+1})}{\e^{\sqrt{w}x}+\e^{-\sqrt{w+1}x}},
\]
and so \eqref{L2} follows by dominated convergence. Applying similar estimates to 
the derivative
\[
(q_{\omega,\delta})_x(x)=
\sqrt{2}(\sqrt{\omega}+\sqrt{\omega+\delta})
\frac{\sqrt{w+\delta}\,\e^{-\sqrt{w+\delta}x}-\sqrt{w}\,\e^{\sqrt{w}x}}{(\e^{\sqrt{w}x}+\e^{-\sqrt{w+\delta}x})^2}
\]
and using again dominated convergence, we also have
\begin{equation*}
\int_\R |(\ffi_\omega)_x(x) - (q_{\omega,\delta})_x(x)|^2\diff x 
\to 0 \quad\text{as} \ \delta\to0,
\end{equation*}
from which the conclusion follows.
\end{proof}

\section{Remarks on the defocusing case}\label{defoc}

The `defocusing' equation \eqref{defoc_nls}
has also been considered in \cite{mus1,mus2}. 
Our local well-posedness result, Theorem~\ref{local.thm}, 
carries over to \eqref{defoc_nls}, with an identical proof. 
On the other hand, it is shown in \cite[p.~936]{mus2} that `one-soliton' solutions of the type \eqref{2param} 
are not available in the defocusing case.
Global well-posedness for \eqref{defoc_nls} seems to be an open problem.



\end{document}